\newtheorem{mydef}{Definition}
\newtheorem{thrm}{Theorem}
\newtheorem{remark}{Remark} 
\newtheorem{lem}{Lemma}
\newtheorem{cor}{Corollary}
\theoremstyle{plain}
\newcommand{\norm}[1]{\left\lVert#1\right\rVert}
\begin{document}
%

\title{ Stabilization of  Uncertain Discrete-time Linear System via Limited Information}
\author{Niladri Sekhar Tripathy,~\IEEEmembership{}
        I. N. Kar~\IEEEmembership{}
        and~Kolin Paul~\IEEEmembership{}
\thanks{Niladri Sekhar Tripathy and I. N. Kar are with the Dept. of  Electrical Engineering, Indian Institute of Technology Delhi, New Delhi, India e-mail: (niladri.tripathy@ee.iitd.ac.in,  ink@ee.iitd.ac.in).}
\thanks{Kolin Paul is with the Dept. of Computer science \& Engineering, Indian Institute of Technology Delhi, New Delhi, India e-mail: kolin@cse.iitd.ac.in. }
\thanks{}}
\markboth{}%
{Shell \MakeLowercase{\textit{et al.}}: Bare Demo of IEEEtran.cls for Journals}
\maketitle

\begin{abstract}
 This paper  proposes a procedure to control an uncertain discrete-time networked control system through a limited stabilizing input information. The system is primarily affected by the time-varying, norm bounded, mismatched parametric uncertainty. The input information is limited due to unreliability of communicating networks. An event-triggered based robust control strategy is  adopted  to capture  the network unreliability.  In event-triggered control the control input is computed and updated at the system end only when a pre-specified event condition is violated. The robust control input is derived to stabilize the  uncertain  system by solving an optimal control problem based on a virtual nominal dynamics and a modified cost-functional.  The designed robust control law with limited information ensures input-to-sate stability (ISS) of original system under presence of  mismatched uncertainty. Deriving the  event-triggering condition for discrete-time uncertain system and ensuring the stability  of such system  analytically  are the key contributions of this paper. A numerical example is given to prove the efficacy of the proposed event-based control algorithm over the conventional periodic one. 
    
\end{abstract}

\begin{IEEEkeywords}
Discrete-time event-triggered control, discrete-time robust control, mismatched uncertainty, optimal control, input to state stability.
\end{IEEEkeywords}

\IEEEpeerreviewmaketitle

\section{Introduction}\label{sec1}

Controlling uncertain cyber-physical systems (CPS) or networked control systems (NCS) subjected to limited information is a challenging task. Generally in CPS or NCS, multiple physical systems are interconnected and exchanged their local information through a  digital network. Due to shared nature of communicating network the continuous or periodic transmission of information causes a large bandwidth requirement.  Apart from bandwidth requirement, most  of the cyber-physical systems are powered by DC battery, so efficient use of power is essential. It is observed that  transmitting data over the communicating  network  has a proportional relation with the power consumption. This trade off motivates a large number of researchers to continue their research on NCS with minimal sensing and actuation \cite{nst2}-\cite{nstt34}, \cite{nst60}-\cite{nst44}. Recently an event-triggered based control technique is proposed by \cite{nst33}-\cite{nst35}, to reduce the information requirement for realizing a stabilizing control law.  In event-triggered control, sensing at system end and actuation at controller end happens only when a pre-specified event condition is violated. This event-condition mostly depends on system's current states or outputs.  The primary shortcoming of continuous-time event-triggered control is that it requires a continuous   monitoring of event condition. In \cite{nst60}-\cite{nst61}, Heemels et al. proposes an event-triggering technique where event-condition is monitored periodically.  To avoid continuous or periodic monitoring, self-triggered control technique is reported in \cite{nst37}-\cite{nst69p} where the next event occurring instant is computed analytically based on the state of previous instant.  Maximizing  inter-event time is the key aim of the event-triggered or self-triggered control in order to reduce the total transmission requirements.  A Girad \cite{nst35}, proposes a new event-triggering mechanism named as dynamic event-triggering to achieve  larger inter-event time with respect to previous approach \cite{nst33}. The above discussion says the efficacy of aperiodic sensing and actuation over the continuous or periodic one in the context of NCS \cite{nst31}.  \par In NCS, uncertainty  is mainly considered in communicating network in the form of time-delay,  data-packet loss in between transreceiving process \cite{nst47}, \cite{nst54}, \cite{nst67}. On the other hand the unmodeled dynamics, time-varying system parameters, external disturbances are the  primary sources of system uncertainty.  The main shortcoming of the classical event-triggered system lies in the fact that one must know the exact model of the plant apriori. A system with an uncertain  model is a more realistic scenario and has far greater significance. However, there are open problems of designing a control law and triggering conditions to deal with system uncertainties. To deal with parametric uncertainty, F. Lin et.al. proposes a continuous-time robust control technique where  control input is generated by solving an equivalent optimal control problem \cite{nst50}-\cite{nstf1}. The optimal control problem is formulated  based on the nominal or auxiliary dynamics by  minimizing a quadratic cost-functional which depends on the upper bound of uncertainty. The similar concepts is extended for nonlinear continuous system in \cite{nst51}, \cite{nst52}, where a non-quadratic cost-functional is considered.   However, this framework for discrete-time uncertain system is not reported.   Recently E. Garcia et.al. have proposed an event-triggered based discrete-time robust control technique for  NCS \cite{nst1}-\cite{nstt34}. To realize the robust control law their prior assumptions are that the physical system is affected by matched uncertainty (which is briefly discussed in section \ref{sec2}) and the  uncertainty is only in system's state matrix. But considering mismatched uncertainty  in both state and input matrices is more realistic control problem.   This is due to the fact that,  the existence of stabilizing control law can be guaranteed for matched uncertainty but difficult for  mismatched system.  Stabilization of mismatched uncertain system with communication constraint  is a challenging task.  This motivates us to formulate the present problem.  \\   
In this paper a novel discrete-time  robust control technique is proposed for NCS, where physical systems are inter-connected through an unreliable communication link. Due to unavailability of network the robust control law is designed using minimal state information. The designed control law acts on the physical system where the state model is affected  by mismatched parametric uncertainty.  For mathematical simplicity we are avoiding other uncertainties like external disturbances, noises.  The communication unreliability  is resolved by considering an event-triggered control technique, where control input is computed and updated only when an event-condition is satisfied. To derive the discrete-time robust control input a virtual nominal  system and a modified cost-functional is defined. Solution of the optimal control problem  helps to design the stabilizing control input for uncertain system. The input-to-state stability (ISS) technique is used to derive the event-triggering condition and as well as to ensure the stability of closed loop system. 
\subsection*{Highlights of contributions}
The contributions of this paper are summarized as follows:
\begin{description}
\item[(i)] Present paper proposes a robust control framework for discrete time linear system where state  matrix consists of mismatched  uncertainty.  The periodic robust control law is derived by formulating an equivalent optimal control problem. Optimal control problem is solved for an virtual system with a quadratic cost-functional which depends on the upper bound on uncertainty.  
\item[(ii)] The virtual nominal dynamics have two control inputs $u$ and $v$. The concept of  virtual  input $v$ is used to derive the existence of stabilizing control input $u$ to tackle mismatched uncertainty. The proposed robust control law ensures asymptotic convergence of uncertain closed loop system.  
 \item[(iii)] An event-triggered  robust control technique is proposed for discrete-time uncertain system, where controller  is not collocated with the system and  connected thorough a  communication network. The aim of this control law is to achieve robustness against parameter variation with event based communication and control.  The event condition is derived from the ISS based stability criteria.   
 \item[(iv)] It is shown that some of the existing results of matched system  is a special case of the proposed results \cite{nst1}-\cite{nstt34}. A comparative study is  carried out between  periodic control over the event-triggered control  on the numerical example.
 \end{description}
\subsection*{Notation \& Definitions}
  The notation $\|x\|$ is used to denote the Euclidean norm of a vector $x\in{\mathbb{R}^{n}}$. Here $\mathbb{R}^{n}$ denotes the $n$ dimensional Euclidean real space and $\mathbb{R}^{n\times {m}}$ is a set of all $(n\times{m})$  real matrices. $\mathbb{R}^{+}_{0}$ and $\mathbb{I}$ denote the all possible set of positive real numbers and non-negative integers. $X\leq{0}$, $X^{T}$ and $X^{-1}$ represent the negative definiteness, transpose and inverse of matrix $X$, respectively. Symbol $I$ represents an identity matrix with appropriate dimensions and time $t_{\infty}$ implies $+\infty$. Symbols $\lambda_{min}(P)$ and $\lambda_{max}(P)$ denote  the minimum and maximum eigenvalue of  symmetric matrix $P\in \mathbb{R}^{n\times n} $ respectively. 
  Through out this paper following definitions are used to derived the theoretical results \cite{nst69}.  
  \begin{mydef}\label{def23}
  A system 
  \begin{eqnarray}
  x(k+1)=Ax(k)+Bu(k)
  \end{eqnarray}
  is globally ISS if it satisfies 
  \begin{eqnarray}
  \|x(k)\|\leq\beta(\|x(0)\|,k)+\gamma\bigg( sup_{\tau\in[0,\infty)}\big\{\|u_{\tau}\|\big\}\bigg)
  \end{eqnarray}
 with each input $u(k)$ and each initial condition $x(0)$. The functions $\beta$ and $\gamma$ are $KL$ and $K_{\infty}$ functions respectively.
  \end{mydef}
  \begin{mydef}\label{def34}
  A discrete-time system $x(k+1)=f(x, u, d)$, whose origin is an equilibrium point if $f(0,x)=0$ $\forall \ k>0$. 
A  positive function $ V: \mathbb{R}^{n}\rightarrow \mathbb{R}$  is an ISS Lyapunov  function for that system  if there exist class $\emph{K}_{\infty}$ functions $ \alpha_{1}, \alpha_{2}, \alpha_{3}$ and a class  $\emph{KL}$ function  $ \gamma$ for all $x, d \in\mathbb{R}^{n}$ by satisfying the following conditions \cite{nst40}-\cite{nst41}.
\begin{eqnarray}\label{deft1}
\alpha_{1}(\|x(k)\|)\leq  V (x(k))\leq \alpha_{2}(\|x(k)\|)\\
V(x(k+1))-V(x(k))\leq -\alpha_{3}{(\|x(k)\|)}+\gamma{(\|d(k)\|)}\label{deft2}
\end{eqnarray}
\end{mydef}
\section{Robust Control Design}\label{sec2}
\emph{System Description:}
A discrete-time uncertain linear system is described by the state equation in the form 
\begin{eqnarray}\label{sys2}
x(k+1)=(A+\Delta A)x(k)+Bu(k)
\end{eqnarray}
where $x(k)\in \mathbb{R}^{n} $ is the state and $u(k)\in \mathbb{R}^{m}$ is the control input. The matrices $A\in \mathbb{R}^{n\times n}$ ,  $B\in\mathbb{R}^{n\times m}$ are the nominal, known constant matrices.  The unknown  matrices $\Delta A(p)\in \mathbb{R}^{n\times n} $ is used to represent the system uncertainty  due to bounded variation of   $p$. The uncertain parameter vector $p$  belongs to  a  predefined bounded set $\Omega$. Generally system uncertainties are classified as matched and  mismatched uncertainty. 
System (\ref{sys2}) suffers through  the matched uncertainty if the uncertain matrix $\Delta A(p)$  satisfy the following equality
\begin{eqnarray}\label{une3}
&&\Delta A(p)=B\phi(p) \  \\&&
\phi(p)^{T}\phi(p)\leq U_{matched}, \ \ \forall \  p\in \Omega
\end{eqnarray}
where $U_{matched}$ is the upper bound of uncertainty $\phi(p)^{T}\phi(p)$. In other words, $\Delta A(p)$ is in the range space of nominal input matrix $B$.  For mismatched case equality (\ref{une3}) does not holds.
For simplification,  uncertainty can be decomposed in matched and mismatched component such as
\begin{eqnarray}\label{une1}
\Delta A(p)= BB^{+}\Delta A+(I-BB^{+})\Delta A , \ \ \ \forall p\in \Omega
\end{eqnarray}
where $BB^{+}\Delta A$ is  matched and $(I-BB^{+})\Delta A$ is the mismatched one. 
The matrix $B^{+}$ denotes the pseudo inverse of  matrix $B$. 
 The perturbation $\Delta A$ is upper bounded by a known matrices $F$ and defined as
\begin{eqnarray}\label{scon2}
 \epsilon^{-1}\Delta A^{T}\Delta A\leq F
\end{eqnarray}
where, the scalar $\epsilon>0$ is a design parameter.  
 To stabilize (\ref{sys2}), it is essential  to formulate a  robust control problem as discussed below.   
\subsubsection*{Robust control problem}
Design a state feedback  controller law $u(k)=Kx(k)$ such that the uncertain closed loop system (\ref{sys2}) with the nominal dynamics
\begin{eqnarray}\label{syss1}
x(k+1)=Ax(k)+Bu(k)
\end{eqnarray} is asymptotically stable for all $p\in \Omega$.\\
In order  to stabilize (\ref{sys2}), the robust controller gain $K$ is designed through an optimal control approach. 
 \subsubsection*{Optimal control approach} The essential idea is to  design  an optimal control law for  a virtual nominal dynamics which minimizes a modified  cost-functional, $J$. The cost-functional is called modified as it  consists with the upper-bound of uncertainty.   An extra term $\alpha(I-BB^{+})v(k)$ is added with  (\ref{syss1}) to define virtual system (\ref{nomi1}). The derived optimal input for virtual system is also a robust input for original uncertain system.  The virtual  dynamics and  cost-functional for (\ref{sys2}) are given bellow:
 \begin{equation}\label{nomi1}
x(k+1)=Ax(k)+Bu(k)+\alpha(I-BB^{+})v(k)
\end{equation} 
\begin{eqnarray}\label{cos1}
J&=&\frac{1}{2}\bigg\{\sum_{k=0}^{\infty} x(k)^{T}Qx(k)+x(k)^{T}Fx(k)+\beta^{2} x(k)^{T}x(k)\nonumber \\ &&+u(k)^{T}R_{1}u(k)+v(k)^{T}R_{2}v(k)\bigg\}
\end{eqnarray}
where $Q\geq 0$,  $R_{1}>0$, $R_{2}>0$ and the scaler  $\alpha$ is a design parameter.
\begin{remark}
 Here $u=Kx$ is the stabilizing control input and $v(k)=Lx(k)$ is an virtual input.  The  input $v$ is called  virtual since it is not used directly to stabilize (\ref{sys2}). But it helps indirectly to design $K$.   The usefulness of L in the context of event-triggered  control  is discussed  in Section \ref{sec3}.  
\end{remark}
The  robust control law for (\ref{sys2}) is designed by minimizing (\ref{cos1}) for the virtual system (\ref{nomi1}). The results are stated in the form  a theorem. 
\begin{thrm}\label{thr12}
Suppose there exists a scalar $\epsilon>0$ and positive definite solution $P>0$ of equation (\ref{ri1}) 
\begin{eqnarray}\label{ri1}
&& A^{T}\big \{P^{-1}+BR_{1}^{-1}B^{T}+\alpha^{2}(I-BB^{+})R_{2}^{-1}\nonumber \\ &&(I-BB^{+})^{T}\big\}^{-1}A-P+Q+F+\beta^{2}I=0
\end{eqnarray}
with 
\begin{eqnarray}\label{scon1}
  (\epsilon^{-1}I-P)^{-1}>0,
   \end{eqnarray}
   \begin{eqnarray}\label{scon2}
 \epsilon^{-1}\Delta A^{T}\Delta A\leq F.
\end{eqnarray}
Moreover the controller gains $K$ and $L$ are computed  as
\begin{eqnarray}\label{g1}
K &=&-R_{1}^{-1}B^{T}\big\{P^{-1}+BR_{1}^{-1}B^{T}+\alpha^{2}(I-BB^{+})R_{2}^{-1}\nonumber \\ &&(I-BB^{+})^{T}\big\}^{-1}A \\ 
L&=&-\alpha R_{2}^{-1}(I-BB^{+})\big\{P^{-1}+B^{T}R_{1}^{-1}B+\alpha ^{2}(I-\nonumber \\ &&BB^{+})R_{2}^{-1}(I-BB^{+})^{T}\big\}^{-1}A \label{g2}
\end{eqnarray}
  These gains $K$ and $L$ are the  robust solution of (\ref{sys2}) $\forall \ p \in \Omega$, if it satisfies  following inequality:
\begin{eqnarray}\label{scon3}
\big(\beta^{2}I+L^{T}R_{2}L+K^{T}R_{1}K\big)- A_{c}^{T}\big(P^{-1}-\epsilon I\big)^{-1}A_{c}\geq 0
\end{eqnarray} 
where $A_{c}=A+BK$.
\end{thrm}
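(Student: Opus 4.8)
The plan is to treat the virtual system (\ref{nomi1}) as a standard discrete-time linear–quadratic regulator (LQR) problem, and then to recycle its optimal value function as a Lyapunov function for the \emph{actual} uncertain plant (\ref{sys2}). The statement really has two halves: deriving (\ref{ri1})–(\ref{g2}) is pure LQR bookkeeping, while showing that (\ref{scon1})–(\ref{scon3}) certify robust stability is the real content.

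First I would stack the two inputs as $w(k)=\big[u(k)^{T}\ v(k)^{T}\big]^{T}$ and write (\ref{nomi1}) as $x(k+1)=Ax(k)+\tilde B w(k)$ with $\tilde B=\big[\,B\ \ \alpha(I-BB^{+})\,\big]$. The cost (\ref{cos1}) then carries state weight $\tilde Q=Q+F+\beta^{2}I$ and block-diagonal input weight $\tilde R=\mathrm{diag}(R_{1},R_{2})>0$. The classical discrete LQR solution gives the Riccati equation $P=\tilde Q+A^{T}PA-A^{T}P\tilde B(\tilde R+\tilde B^{T}P\tilde B)^{-1}\tilde B^{T}PA$, the gain $\tilde K=-(\tilde R+\tilde B^{T}P\tilde B)^{-1}\tilde B^{T}PA$, and value function $V(x)=\tfrac12 x^{T}Px$. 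Applying the matrix inversion lemma to the bracket, the push-through identity $(\tilde R+\tilde B^{T}P\tilde B)^{-1}\tilde B^{T}P=\tilde R^{-1}\tilde B^{T}(P^{-1}+\tilde B\tilde R^{-1}\tilde B^{T})^{-1}$ to the gain, and noting $\tilde B\tilde R^{-1}\tilde B^{T}=BR_{1}^{-1}B^{T}+\alpha^{2}(I-BB^{+})R_{2}^{-1}(I-BB^{+})^{T}$, converts the Riccati equation into exactly (\ref{ri1}) and the two blocks of $\tilde K$ into (\ref{g1}) and (\ref{g2}). This part is entirely routine.

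For robustness I would take $V(x)=\tfrac12 x^{T}Px$ and evaluate its one-step increment along the closed-loop \emph{uncertain} trajectory $x(k+1)=(A_{c}+\Delta A)x(k)$, $A_{c}=A+BK$ — note that the virtual input $v$ never acts on the real plant, so $A_{c}$, not the nominal optimal matrix $A_{c}+\alpha(I-BB^{+})L$, appears. The key device is the completion-of-squares bound
\begin{eqnarray*}
(A_{c}+\Delta A)^{T}P(A_{c}+\Delta A)\leq A_{c}^{T}(P^{-1}-\epsilon I)^{-1}A_{c}+\epsilon^{-1}\Delta A^{T}\Delta A ,
\end{eqnarray*}
which holds precisely when $P^{-1}-\epsilon I>0$, and that is exactly what (\ref{scon1}) guarantees, since $(\epsilon^{-1}I-P)^{-1}>0\iff P^{-1}>\epsilon I$. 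Inserting the uncertainty bound (\ref{scon2}), $\epsilon^{-1}\Delta A^{T}\Delta A\leq F$, reduces the increment to $\tfrac12 x^{T}\big[A_{c}^{T}(P^{-1}-\epsilon I)^{-1}A_{c}+F-P\big]x$, so it remains to establish $A_{c}^{T}(P^{-1}-\epsilon I)^{-1}A_{c}\leq P-F$.

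Finally I would close this in two steps. Condition (\ref{scon3}) is literally $A_{c}^{T}(P^{-1}-\epsilon I)^{-1}A_{c}\leq\beta^{2}I+K^{T}R_{1}K+L^{T}R_{2}L$, so it suffices to prove $\beta^{2}I+K^{T}R_{1}K+L^{T}R_{2}L\leq P-F$. Writing $M=(P^{-1}+\tilde B\tilde R^{-1}\tilde B^{T})^{-1}$, the Riccati equation (\ref{ri1}) reads $P-F-\beta^{2}I=Q+A^{T}MA$, while the gains give $K^{T}R_{1}K+L^{T}R_{2}L=\tilde K^{T}\tilde R\tilde K=A^{T}M(\tilde B\tilde R^{-1}\tilde B^{T})MA$; since the Schur-type identity $A^{T}MA-A^{T}M(\tilde B\tilde R^{-1}\tilde B^{T})MA=A^{T}MP^{-1}MA\geq0$ holds and $Q\geq0$, the desired inequality follows. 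Chaining the three estimates yields $V(x(k+1))-V(x(k))\leq0$ for all $p\in\Omega$ (strict whenever $A^{T}MP^{-1}MA+Q>0$), giving robust asymptotic stability. I expect the main obstacle to be the matrix bound above with its correct $\epsilon$-versus-$\epsilon^{-1}$ placement, together with the conceptual point that (\ref{scon3}) is precisely the extra certificate needed to absorb the mismatched component of $\Delta A$ that the virtual input $v$ cannot cancel.
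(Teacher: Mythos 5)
Your proposal is correct and follows essentially the same route as the paper: LQR on the stacked-input virtual system to obtain (\ref{ri1})--(\ref{g2}), then a Lyapunov argument with $V=x^{T}Px$ along the uncertain closed loop, using the $\epsilon$-completion-of-squares bound (which is the paper's Lemma \ref{lem12} combined with the identity $(P^{-1}-\epsilon I)^{-1}=P+P(\epsilon^{-1}I-P)^{-1}P$) together with the Riccati/gain inequality $K^{T}R_{1}K+L^{T}R_{2}L\leq A^{T}MA+Q$ (the content of the paper's Lemma \ref{lem34}) to conclude $\Delta V\leq 0$ under (\ref{scon1})--(\ref{scon3}). The only real difference is that you prove these intermediate facts explicitly, whereas the paper invokes them as Lemmas \ref{lem12} and \ref{lem34} with proofs omitted, so your write-up is the more self-contained of the two.
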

\begin{proof}
The proof of this theorem is given in Appendix \ref{A1}.
\end{proof}
The result  in Theorem \ref{thr12}  does not consider any communication constraint in realising the control law. So we formulate a  robust control problem  for an uncertain system with event-triggered control input. The block diagram of proposed robust control technique is shown in Figure \ref{fi:bld}. 
\begin{figure}[h]
\begin{center}
\includegraphics[width=8.5 cm,height=5 cm]{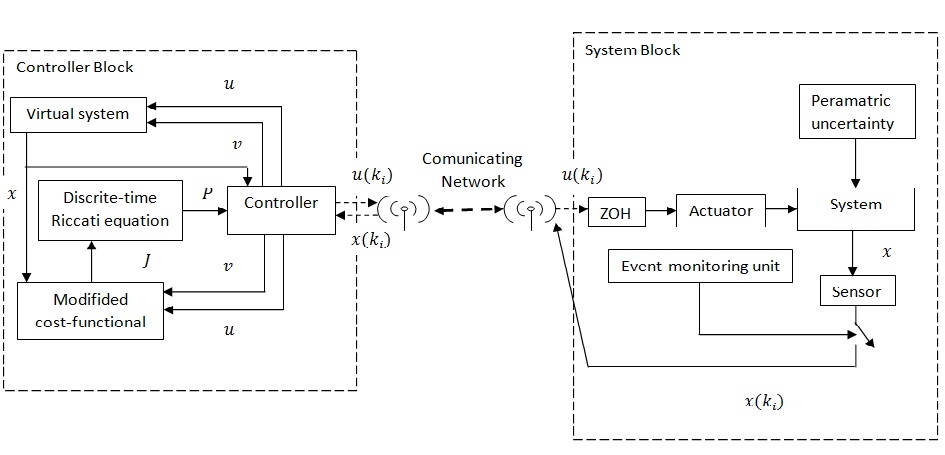}
\caption{Block diagram of proposed robust control law using  unrealiable communication channel.}\label{fi:bld}
\end{center}
\end{figure}
It has three primary parts namely,  system block ,  controller block and  a unreliable communicating network between system and controller. The states of system are periodically measured by the sensor which is collocated with the system. The sensor is connected with the controller through a communication network. An event-monitoring unit verifies a state-dependent event condition periodically and transmits state information to the controller only when the event-condition is satisfied.  The robust controller gain $K$ with eventual state information, $x(k_{i})$ which is received from uncertain system is used to generate the event-triggered  control law $u(k_{i})= Kx(k_{i})$ to stabilize (\ref{sys2}). Here $k_{i}$ denotes the latest event-triggering instant and  the control input is  updated at  aperiodic discrete-time instant $k_{0}, \ k_{1}, \ k_{2}$. A zero-order-hold (ZOH) is used to hold the last transmitted control input until the next input is transmitted. Here the actuator is collocated with the system and actuating control law is assumed to change instantly with the transmission of input. For simplicity, this paper does not consider any time-delay between sensing, computation and actuation instant. But in real-practice there must be some delay. This delay will effect the system analysis and as well as in event-condition.    \\
  A discrete-time linear uncertain system (\ref{sys2}) with event-triggered input
 \begin{eqnarray}\label{ine}
u(k_{i})=Kx(k_{i}), \ \ \ k\in\ [k_{i},\ k_{i+1} ) 
\end{eqnarray}  
is written as
\begin{eqnarray}\label{syse1}
x(k+1)=(A+\Delta A)x(k)+Bu(k_{i})
\end{eqnarray}
From  \cite{nst34}, this can be modelled as 
 \begin{eqnarray}
x(k+1)=(A+\Delta A +BK)x(k)+BKe(k)
 \end{eqnarray}
 The variable $e(k)$ is named as measurement error. It is used to represent the eventual state information $x(k_{i})$ in the form 
 \begin{eqnarray}\label{er}
 e(k)=x(k_{i})-x(k) \ \ \ k\in [k_{i}, \ k_{i+1})
 \end{eqnarray}
\textbf{Problem Statement:}
Design a feedback control law  to stabilize the uncertain discrete-time event-triggered system (\ref{syse1})  such that the closed loop system is ISS with respect to its measurement error $e(k)$.\\
\emph{Proposed solution:} \ This problem is solved in two different steps. Firstly, the controller is  designed by adopting the optimal control and secondly an event-triggering rule is derived to make (\ref{syse1})  ISS. The design procedure of controller gains, based on Theorems \ref{thr12}, is already discussed in Section \ref{sec2}. The event-triggering law is derived from the Definition \ref{def23} \& \ref{def34} assuming an ISS Lyapunov function $V(x)=x^{T}Px$. The design procedure of event-triggering condition is discussed elaborately in Section \ref{sec3}.    
\section{Main theoretical Results}\label{sec3}
This section discusses the design procedure of event-triggering condition and stability proof of (\ref{syse1})  under presence of bounded  parametric uncertainty. The results are stated in the form of a theorem.
\begin{thrm}\label{th1}
 Let $P>0$ be a solution of the  Riccati equation (\ref{ri1}) for a scalar $\epsilon>0$ and satisfy the following inequalities
\begin{eqnarray}\label{cons11}
Z=\epsilon^{-1}I+P(\epsilon^{-1}I-P)^{-1}P>0
\end{eqnarray}
\begin{eqnarray}\label{cons13}
 \Delta A^{T}Z\Delta A\leq F
\end{eqnarray}
\begin{eqnarray}\label{cons12}
\beta^{2}I+K^{T}R_{1}K+L^{T}R_{2}L-A_{c}^{T}ZA_{c}\geq 0
\end{eqnarray}
where controller gains $K$ and $L$ are computed by (\ref{g1}), (\ref{g2}). 
 The event-triggered control law (\ref{ine}) ensures the ISS of (\ref{syse1}) $\forall p\in \Omega$ if the input is updated through the following triggering-condition 
 \begin{eqnarray}
\norm{e(k)}^{2}\geq \mu_{1} \norm{x(k)}^{2}.
\end{eqnarray}
Moreover the design parameter $\mu_{1}$ is explicitly defined as 
\begin{eqnarray}\label{mu1}
\mu_{1}=\frac{\sigma \lambda_{min}(Q_{1})}{\norm{K^{T}B^{T}ZBK}} 
\end{eqnarray}
where scalar $ \sigma \  \in (0,1)$ and positive matrix $Q_{1}=(\beta^{2}I+K^{T}R_{1}K+L^{T}R_{2}L-A_{c}^{T}ZA_{c}) $.
\end{thrm}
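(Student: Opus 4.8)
The plan is to exhibit $V(x)=x^{T}Px$ as an ISS-Lyapunov function for the closed loop (\ref{syse1}) in the sense of Definition \ref{def34}, and then to show that the threshold $\mu_{1}$ forces the dissipation inequality to be strictly negative between events. Since $P>0$, the sandwich estimate $\lambda_{min}(P)\|x\|^{2}\le V(x)\le\lambda_{max}(P)\|x\|^{2}$ immediately supplies the $\mathcal{K}_{\infty}$ bounds (\ref{deft1}), so the entire burden falls on the difference inequality (\ref{deft2}). First I would insert the error model, writing the closed loop as $x(k+1)=(A_{c}+\Delta A)x(k)+BKe(k)$ with $A_{c}=A+BK$ and $e$ as in (\ref{er}), and expand $\Delta V=x(k+1)^{T}Px(k+1)-x(k)^{T}Px(k)$.

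The crux, and the step I expect to be the main obstacle, is removing the unknown $\Delta A$ from this quadratic form. Here I would invoke the bounding lemma that, for $P>0$ with $\epsilon^{-1}I-P>0$, yields $(a+b)^{T}P(a+b)\le a^{T}Za+b^{T}Zb$ with $Z$ exactly as in (\ref{cons11}); the positivity asserted there is precisely what legitimizes this step. The error is then isolated into the term $e^{T}K^{T}B^{T}ZBKe$, while the uncertain part of the state quadratic is controlled through $\Delta A^{T}Z\Delta A\le F$ from (\ref{cons13}), which trades the unknown matrix for the known bound $F$. The delicate point is that the cross contribution between $A_{c}$ and $\Delta A$ must be absorbed by this same $Z$-weighting rather than leaving stray indefinite terms; arranging the weighting in (\ref{cons11})--(\ref{cons13}) so that it closes exactly against $F$ is where the argument is tightest.

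Next I would collapse the remaining state quadratic using the design equations. Writing $\Phi=P^{-1}+BR_{1}^{-1}B^{T}+\alpha^{2}(I-BB^{+})R_{2}^{-1}(I-BB^{+})^{T}$, the gain formulas (\ref{g1}) and (\ref{g2}) give $K^{T}R_{1}K+L^{T}R_{2}L=A^{T}\Phi^{-1}(\Phi-P^{-1})\Phi^{-1}A$, so the Riccati equation (\ref{ri1}) produces the identity $P-F-\beta^{2}I-K^{T}R_{1}K-L^{T}R_{2}L=A^{T}\Phi^{-1}P^{-1}\Phi^{-1}A+Q\ge 0$. This is exactly what is needed to reduce the state quadratic to $-x^{T}Q_{1}x$ with $Q_{1}=\beta^{2}I+K^{T}R_{1}K+L^{T}R_{2}L-A_{c}^{T}ZA_{c}$, whose positive semidefiniteness is guaranteed by hypothesis (\ref{cons12}). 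At this stage I obtain $\Delta V\le -x^{T}Q_{1}x+e^{T}K^{T}B^{T}ZBKe$, i.e. the ISS-Lyapunov inequality (\ref{deft2}) with $\alpha_{3}(\|x\|)=\lambda_{min}(Q_{1})\|x\|^{2}$ and dissipation driven by $e$, so Definition \ref{def34} certifies ISS of (\ref{syse1}) with respect to the measurement error.

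Finally I would tie the dissipation inequality to the threshold. Bounding the two quadratics by extreme eigenvalues gives $\Delta V\le -\lambda_{min}(Q_{1})\|x\|^{2}+\|K^{T}B^{T}ZBK\|\,\|e\|^{2}$, and requiring that no event has yet triggered means $\|e(k)\|^{2}<\mu_{1}\|x(k)\|^{2}$. Substituting $\mu_{1}$ from (\ref{mu1}) then yields $\Delta V\le-(1-\sigma)\lambda_{min}(Q_{1})\|x\|^{2}<0$ for $\sigma\in(0,1)$, which simultaneously confirms the decrease underlying stability and pins down the explicit form of $\mu_{1}$. I expect the genuine difficulty to be concentrated in the second and third paragraphs—choosing the grouping and the $Z$-weighting so that the uncertainty and its induced cross terms are absorbed exactly into $F$ and $Q_{1}$ via the Riccati identity—whereas the eigenvalue bookkeeping and the final threshold substitution are routine once that step is secured.
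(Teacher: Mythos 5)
Your proposal is correct, and its skeleton coincides with the paper's proof: take $V(x)=x^{T}Px$, expand $\Delta V$ along $x(k+1)=(A_{c}+\Delta A)x(k)+BKe(k)$, absorb the cross terms by completion of squares with the weight $\epsilon^{-1}I-P>0$ so that each of $A_{c}x$, $\Delta Ax$, $BKe$ is weighted by the same matrix $Z$ (this is the content of Lemma~\ref{lem12} and display (\ref{v2})), trade $\Delta A^{T}Z\Delta A$ for $F$ via (\ref{cons13}), substitute the Riccati equation (\ref{ri1}), arrive at $\Delta V\le -x^{T}Q_{1}x+e^{T}K^{T}B^{T}ZBKe$ (the paper's (\ref{v4})), and close with the threshold computation giving $\Delta V\le-(1-\sigma)\lambda_{min}(Q_{1})\|x\|^{2}$ between events.

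The one genuine deviation is the middle algebraic step. Writing $\Phi=P^{-1}+BR_{1}^{-1}B^{T}+\alpha^{2}(I-BB^{+})R_{2}^{-1}(I-BB^{+})^{T}$, the paper passes from $A_{c}^{T}ZA_{c}-A^{T}\Phi^{-1}A$ to $-(K^{T}R_{1}K+L^{T}R_{2}L)$ by invoking Lemma~\ref{lem34}, whose proof is omitted; you instead compute directly from the gain formulas (\ref{g1})--(\ref{g2}) the identity $K^{T}R_{1}K+L^{T}R_{2}L=A^{T}\Phi^{-1}(\Phi-P^{-1})\Phi^{-1}A$ (valid because $I-BB^{+}$ is a symmetric projector), which with (\ref{ri1}) gives $P-F-\beta^{2}I-K^{T}R_{1}K-L^{T}R_{2}L=Q+A^{T}\Phi^{-1}P^{-1}\Phi^{-1}A\ge0$, and then reach (\ref{v4}) by simply discarding this nonnegative remainder. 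Your route is more self-contained and in effect supplies a proof of the inequality for which the paper relies on the unproven lemma; the paper's route buys modularity, since Lemmas~\ref{lem12} and \ref{lem34} are reused in the proof of Theorem~\ref{thr12}. One point to make explicit when writing yours up: the two-vector bound $(a+b)^{T}P(a+b)\le a^{T}Za+b^{T}Zb$ cannot be recursed naively on a three-term sum (since $Z\neq P$); you need the three-term version, which holds with the same $Z$ once the three cross terms are paired cyclically, placing the $P(\epsilon^{-1}I-P)^{-1}P$ burden on each of $A_{c}x$, $\Delta Ax$, $BKe$ exactly once---this is precisely the grouping behind (\ref{v2}).
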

To prove the above theorem some intermediate results are stated in the form of following lemmas. The proof of these lemmas are omitted due to limitation of pages. 
\begin{lem}\label{lem12}
Let $P$ be a positive definite solution of  (\ref{ri1}). Then there exist a scalar $\epsilon>0$ such that 
\begin{eqnarray}\label{cons1}
&& A_{c}^{T}P\Delta A+\Delta  A^{T}PA_{c}+\Delta A^{T}P\Delta A\leq  \nonumber \\&& A_{c}^{T}P(\epsilon^{-1}I-P)^{-1}PA_{c}+\epsilon^{-1}\Delta A^{T}\Delta A
\end{eqnarray} 
with
\begin{eqnarray}\label{cons2}
(\epsilon^{-1}I-P)^{-1}>0
\end{eqnarray}
\end{lem}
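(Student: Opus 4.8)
The plan is to prove (\ref{cons1}) by a matrix completion-of-squares argument, after first securing the existence of a suitable $\epsilon$. Since $P>0$, its largest eigenvalue $\lambda_{max}(P)$ is finite and strictly positive, so any choice with $0<\epsilon<1/\lambda_{max}(P)$ forces $\epsilon^{-1}I-P>0$. This symmetric matrix is then invertible, and its inverse is itself positive definite, which simultaneously establishes condition (\ref{cons2}) and the existence claim of the lemma. Accordingly I would abbreviate $G:=\epsilon^{-1}I-P>0$, so that $G^{-1}=(\epsilon^{-1}I-P)^{-1}>0$.

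Next I would recast the desired inequality as the nonnegativity of a single quadratic form. Transposing every term to one side and using the identity $\epsilon^{-1}\Delta A^{T}\Delta A-\Delta A^{T}P\Delta A=\Delta A^{T}(\epsilon^{-1}I-P)\Delta A=\Delta A^{T}G\Delta A$ to merge the two $\Delta A$-quadratic contributions, together with $(\epsilon^{-1}I-P)^{-1}=G^{-1}$ in the leading term, the claim (\ref{cons1}) is seen to be equivalent to
\[
A_{c}^{T}PG^{-1}PA_{c}-A_{c}^{T}P\Delta A-\Delta A^{T}PA_{c}+\Delta A^{T}G\Delta A\geq 0.
\]

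The key step is then to recognize the left-hand side as a perfect square. Exploiting the symmetry of $G$, $G^{-1}$ and $P$, one checks by direct expansion that it factors as
\[
\bigl(G^{-1}PA_{c}-\Delta A\bigr)^{T}\,G\,\bigl(G^{-1}PA_{c}-\Delta A\bigr)\geq 0,
\]
which is nonnegative precisely because $G>0$. Expanding this product recovers the four terms above ($A_{c}^{T}PG^{-1}PA_{c}$ from the $A_{c}$-term, $\Delta A^{T}G\Delta A$ from the $\Delta A$-term, and $-A_{c}^{T}P\Delta A-\Delta A^{T}PA_{c}$ from the cross terms), and substituting back $G=\epsilon^{-1}I-P$ yields exactly (\ref{cons1}).

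The bookkeeping here is entirely routine; the only genuine obstacle is spotting the correct weighting for the square, namely choosing the cross-matrix $G^{-1}P$ so that the $A_{c}$-quadratic term lands on $A_{c}^{T}PG^{-1}PA_{c}$ while the cross terms reproduce $-A_{c}^{T}P\Delta A-\Delta A^{T}PA_{c}$ without any leftover. Once $G=\epsilon^{-1}I-P$ is identified as the natural weighting matrix, the entire conclusion rests on its positive definiteness, which is supplied by the condition (\ref{cons2}).
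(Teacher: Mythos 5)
Your proof is correct. Choosing $0<\epsilon<1/\lambda_{max}(P)$ makes $G:=\epsilon^{-1}I-P$ symmetric positive definite, hence $G^{-1}>0$, which is exactly condition (\ref{cons2}); merging $\epsilon^{-1}\Delta A^{T}\Delta A-\Delta A^{T}P\Delta A$ into $\Delta A^{T}G\Delta A$ is exact; and the expansion of $\bigl(G^{-1}PA_{c}-\Delta A\bigr)^{T}G\bigl(G^{-1}PA_{c}-\Delta A\bigr)$ reproduces precisely the four terms of the rearranged inequality, so positive semidefiniteness of this square yields (\ref{cons1}).

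One caveat on the comparison you were asked for: the paper does not actually contain a proof of this lemma --- it states that the lemma proofs are ``omitted due to limitation of pages'' --- so there is no in-paper argument to measure yours against. That said, your completion of squares is the standard Petersen--Hollot-type bound $X^{T}Y+Y^{T}X\leq X^{T}G^{-1}X+Y^{T}GY$ (here with $X=PA_{c}$, $Y=\Delta A$, $G=\epsilon^{-1}I-P$), which is the argument this robust-control literature routinely uses and is almost certainly what the authors intended; it is also consistent with how the lemma is invoked in the paper's Appendix, where $(\epsilon^{-1}I-P)^{-1}>0$ appears as the companion assumption (\ref{scon1}). Note also that since the Riccati equation (\ref{ri1}) does not involve $\epsilon$, your freedom to shrink $\epsilon$ after $P$ is fixed is legitimate, which is what makes the existence claim of the lemma go through.
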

\begin{lem}\label{lem34}
Let  $P>0$ be a solution of  (\ref{ri1}) which satisfy the equation (\ref{cons11}). Using controller gain (\ref{g1}) and (\ref{g2}), the following inequality holds
\begin{eqnarray}\label{in67}
&& A_{c}^{T}Z A_{c} -A^{T}\big\{P^{-1}+BR_{1}^{-1}B^{T}+\alpha^{2}(I-BB^{+})\nonumber \\ &&  R_{2}^{-1}(I-BB^{+})^{T}\big\}^{-1}A   \leq  A_{c}^{T}(P^{-1}-\epsilon I)^{-1}) A_{c}\nonumber \\ &&  -(L^{T}R_{2}L+K^{T}R_{1}K)
\end{eqnarray}
\end{lem}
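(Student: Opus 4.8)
The plan is to read (\ref{ri1}) as a two-input LQR Riccati equation and collapse (\ref{in67}) to a single matrix inequality. Abbreviating the bracket by $M=P^{-1}+BR_{1}^{-1}B^{T}+\alpha^{2}(I-BB^{+})R_{2}^{-1}(I-BB^{+})^{T}$, equation (\ref{ri1}) becomes $A^{T}M^{-1}A=P-Q-F-\beta^{2}I$, while (\ref{g1}) and (\ref{g2}) read $K=-R_{1}^{-1}B^{T}M^{-1}A$ and $L=-\alpha R_{2}^{-1}(I-BB^{+})M^{-1}A$ (reading the inner bracket of (\ref{g2}) as the same $M$, the two differing only by a transpose), where I also use that the projector $I-BB^{+}$ is symmetric. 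Setting $N=BR_{1}^{-1}B^{T}$ and $D=\alpha^{2}(I-BB^{+})R_{2}^{-1}(I-BB^{+})$ so that $M=P^{-1}+N+D$, the closed-loop matrix factors as $A_{c}=A+BK=(I-NM^{-1})A=(P^{-1}+D)M^{-1}A$, which I will use repeatedly.

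Next I would record the scalar-type identity $Z=(P^{-1}-\epsilon I)^{-1}+(\epsilon^{-1}I-P)$. Since $Z$, $(\epsilon^{-1}I-P)^{-1}$ and $(P^{-1}-\epsilon I)^{-1}$ are all functions of the single matrix $P$ they commute, and writing $(P^{-1}-\epsilon I)^{-1}=\epsilon^{-1}(\epsilon^{-1}I-P)^{-1}P$ and substituting into the definition (\ref{cons11}) of $Z$ lets the factor $(\epsilon^{-1}I-P)(\epsilon^{-1}I-P)^{-1}=I$ cancel, leaving exactly $\epsilon^{-1}I-P$; conditions (\ref{cons11}) and (\ref{scon1}) guarantee the inverses exist and that $\epsilon^{-1}I-P>0$. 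Hence $A_{c}^{T}ZA_{c}=A_{c}^{T}(P^{-1}-\epsilon I)^{-1}A_{c}+A_{c}^{T}(\epsilon^{-1}I-P)A_{c}$, and the $(P^{-1}-\epsilon I)^{-1}$ term cancels against the same term on the right of (\ref{in67}).

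For the cost terms I would substitute the gains to get $K^{T}R_{1}K+L^{T}R_{2}L=A^{T}M^{-1}(N+D)M^{-1}A=A^{T}M^{-1}(M-P^{-1})M^{-1}A=A^{T}M^{-1}A-A^{T}M^{-1}P^{-1}M^{-1}A$. Combining this with the cancellation above, the inequality (\ref{in67}) is equivalent to $A_{c}^{T}(\epsilon^{-1}I-P)A_{c}\le A^{T}M^{-1}P^{-1}M^{-1}A$. Inserting $A_{c}=(P^{-1}+D)M^{-1}A$ then turns both sides into congruences of $M^{-1}A$, so it suffices to establish the matrix inequality $(P^{-1}+D)(\epsilon^{-1}I-P)(P^{-1}+D)\le P^{-1}$ on the range of $M^{-1}A$.

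I expect this last matrix inequality to be the main obstacle and the only nontrivial step. The natural tools are a Schur-complement rewriting or a completion of squares that isolates the projector contribution $D$ carried by the virtual input, using $\epsilon^{-1}I-P>0$ and $Z>0$ from (\ref{cons11})--(\ref{scon1}). The delicate point is the scaling: the left-hand side grows like $\epsilon^{-1}$ whereas the right-hand side does not, so the bound can hold only when the design parameters $\epsilon$, $R_{2}$ and $\alpha$ are chosen so that $D$ and the admissible spectrum of $P$ compensate. Making this precise is where I would expect to invoke the full design inequality (\ref{scon3}) of Theorem \ref{thr12}, rather than (\ref{cons11}) alone, to close the argument.
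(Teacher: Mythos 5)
Your algebraic reduction is correct and efficient: the identity $Z=(P^{-1}-\epsilon I)^{-1}+(\epsilon^{-1}I-P)$, the factorization $A_{c}=(P^{-1}+D)M^{-1}A$, and the computation $K^{T}R_{1}K+L^{T}R_{2}L=A^{T}M^{-1}(N+D)M^{-1}A=A^{T}M^{-1}A-A^{T}M^{-1}P^{-1}M^{-1}A$ all check out, and they correctly reduce (\ref{in67}) to $A_{c}^{T}(\epsilon^{-1}I-P)A_{c}\le A^{T}M^{-1}P^{-1}M^{-1}A$. The gap is your final step, and it cannot be closed the way you propose: inequality (\ref{scon3}) does not imply the residual inequality, and in fact nothing in the paper's hypotheses does, because the lemma is false as printed. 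Concretely, take $n=m=1$, $A=2$, $B=R_{1}=Q=F=\beta^{2}=1$ (so $D=0$, $L=0$); then (\ref{ri1}) gives $P=3+2\sqrt{3}\approx 6.46$, and your residual inequality reads $P^{-1}(\epsilon^{-1}-P)\le 1$, i.e.\ $P\ge 1/(2\epsilon)$. With $\epsilon=0.02$ one verifies (\ref{cons11}) (here $Z\approx 51>0$), (\ref{scon3}) ($1+3-0.53\ge 0$), and even (\ref{cons12}) ($1+3-3.66\ge 0$), yet $P^{-1}(\epsilon^{-1}-P)\approx 6.7$, so (\ref{in67}) fails by a wide margin. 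The $\epsilon^{-1}$ blow-up you flagged at the end is therefore not a tuning issue that design parameters can absorb; it is fatal to the statement under its stated hypotheses.

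The paper omits its own proof of this lemma, so the only available comparison is with how the lemma is actually invoked, namely in passing from (\ref{v3}) to (\ref{v4}) (and from (\ref{grad1}) to (\ref{grad2}) in the appendix). There the term $A_{c}^{T}ZA_{c}$ appears unchanged on both sides, so all that is really needed is $K^{T}R_{1}K+L^{T}R_{2}L\le A^{T}M^{-1}A$ (with $Q\ge 0$ absorbing additional slack in the appendix version); equivalently, the right-hand side of (\ref{in67}) should carry $A_{c}^{T}ZA_{c}$, and the factor $(P^{-1}-\epsilon I)^{-1}$ there is best read as a typo for $Z$. Your own displayed identity proves this corrected statement outright, with slack exactly $A^{T}M^{-1}P^{-1}M^{-1}A\ge 0$. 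So the productive move at your last step was not to defer to (\ref{scon3}), but to recognize that your reduction exposes the printed inequality as strictly stronger than the hypotheses support, exhibit the one-line scalar counterexample, and then state and prove the weaker version that Theorems \ref{thr12} and \ref{th1} actually use.
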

\begin{proof}[Proof of Theorem \ref{th1}]
Assuming $V(x)$ as a ISS Lyapunov function for (\ref{syse1}) and $A_{c}=(A+BK)$. The time difference of $V(x)$ $[\Delta V=V(x(k+1)-V(x(k))]$ along the direction of state trajectory   of (\ref{syse1}) \ $\forall \ p\in \Omega$ is simplified 
Using Lemma (\ref{lem12}).
\begin{eqnarray}\label{v2}
\Delta V &\leq & x^{T}[\epsilon^{-1} A_{c}^{T}A_{c}+A_{c}^{T}P(\epsilon^{-1}I-P)^{-1}PA_{c}-P]x\nonumber \\&&+x^{T}[\Delta A ^{T}P(\epsilon^{-1}I-P)^{-1}P\Delta A+\epsilon^{-1}\Delta A^{T}\Delta A]x\nonumber \\&& +e^{T}(\epsilon^{-1} K^{T}B^{T}BK+K^{T}B^{T}P(\epsilon^{-1}I-\nonumber \\&&P)^{-1}PBK)e
\end{eqnarray} 
For further simplification, the solution of Riccati equation (\ref{ri1}) $P$ is used in (\ref{v2}) to derive the following inequality
\begin{eqnarray}\label{v3}
\Delta V &\leq & x^{T}A_{c}^{T}ZA_{c}x-x^{T}A^{T}\{P(k+1)^{-1}+\nonumber \\&& BR_{1}^{-1}B^{T}+\alpha^{2}(I-BB^{+})R_{2}^{-1}(I-\nonumber \\&& BB^{+})^{T}\}^{-1}Ax-x^{T}(Q+\beta^{2}I)x-x^{T}(F\nonumber \\&&-\Delta A ^{T}Z\Delta A)x +e^{T}(K^{T}B^{T}ZBK)e
\end{eqnarray} 
where $Z$ is defined in (\ref{cons1}). 
Using Lemma \ref{lem34},   (\ref{v3}) is simplified as
\begin{eqnarray}\label{v4}
\Delta V&\leq & -x^{T}(\beta^{2}I+K^{T}R_{1}K+L^{T}R_{2}L-A_{c}^{T}ZA_{c})x\nonumber \\ && +e^{T}K^{T}B^{T}ZBKe
\end{eqnarray}
Applying Definition \ref{def34} in (\ref{v4}), the system (\ref{syse1})  remains ISS stable if  error $e(k)$ satisfies the following inequality
\begin{eqnarray}\label{evc1}
\norm{e}^{2}\geq \mu_{1}\norm{x}^{2}
\end{eqnarray}
where parameter 
\begin{eqnarray}\label{mu1}
\mu_{1}=\frac{\sigma \lambda_{min}(Q_{1})}{\norm{K^{T}B^{T}ZBK}}. 
\end{eqnarray}
The designed parameter $\sigma \in (0,1)$ and matrix $Q_{1}=(\beta^{2}I+K^{T}R_{1}K+L^{T}R_{2}L-A_{c}^{T}ZA_{c})>0$. The equation (\ref{evc1}) represents the event-triggering law for (\ref{syse1}).
\end{proof}
 The proposed robust control framework considers the general system uncertainty, which includes both matched and mismatched component.  Without mismatched part,  system (\ref{sys2})  reduces to matched  system (defined in (\ref{une3})), i.e. $\Delta A=B\phi(p) $ where  $\phi(p)=B^{+}\{A(p)-A(p_{0})\}$ .  Moreover, due to the absence of mismatched part,  the  virtual control input is not necessary in (\ref{nomi1}). As a special case  of Theorem \ref{thr12}, the Corollary \ref{cor1} is introduced for matched system. 
  \begin{cor}\label{cor1}
Suppose there exist a scalar $\epsilon>0$ and a positive definite solution $P$ of Riccati equation 
\begin{eqnarray}
 A^{T}\big \{P^{-1}+BR_{1}^{-1}B^{T})\big\}^{-1}A-P+Q+F+\beta^{2}I=0
\end{eqnarray}
with following conditions
\begin{eqnarray}
 \frac{2}{\epsilon}\phi^{T}B^{T}B\phi &\leq & F \\
 (\epsilon^{-1}I-P)^{-1}&>&0
\end{eqnarray}
Then $\forall p\in \Omega$, there exist a  controller gain  
\begin{eqnarray}
K &=&-R_{1}^{-1}B^{T}\big\{P^{-1}+BR_{1}^{-1}B^{T}\big\}^{-1}A 
\end{eqnarray}
which ensures the ISS of (\ref{sys2}) through the event-triggered control law 
\begin{eqnarray}\label{une82}
\|e\|^{2}\geq \mu_{2}\|x\|^{2}
\end{eqnarray}
 if the following sufficient condition holds
 \begin{eqnarray}\label{une81}
 (\beta^{2}I+K^{T}R_{1}K-\frac{2}{\epsilon}A_{c}^{T}A_{c})\geq 0.
\end{eqnarray}  
The parameter $\mu_{2}$ is derived as 
\begin{eqnarray}\label{une67}
\mu_{2}=\frac{\sigma\lambda_{min}(\tilde{Q} )}{\|K^{T}B^{T}(P^{-1}-\epsilon I)^{-1}BK\|}
\end{eqnarray}
 where scalar  $\sigma\in(0,1)$ and $\tilde{Q}=Q+F+\beta^{2}I$.
\end{cor}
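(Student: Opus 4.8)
The plan is to obtain Corollary~\ref{cor1} as the matched-uncertainty specialization of Theorems~\ref{thr12} and~\ref{th1}, so the bulk of the work is bookkeeping the simplifications rather than fresh analysis. First I would exploit the defining structure of matched uncertainty: since $\Delta A=B\phi(p)$ with $\phi=B^{+}\{A(p)-A(p_{0})\}$, the mismatched projection vanishes, $(I-BB^{+})\Delta A=(I-BB^{+})B\phi=0$, because $(I-BB^{+})B=B-BB^{+}B=0$. Consequently the virtual input $v$ carries no information about the uncertainty and may be switched off by setting $L=0$ (equivalently $\alpha=0$), which deletes the $\alpha^{2}(I-BB^{+})R_{2}^{-1}(I-BB^{+})^{T}$ block everywhere it appears. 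With that block removed, the Riccati equation~(\ref{ri1}) collapses exactly to $A^{T}\{P^{-1}+BR_{1}^{-1}B^{T}\}^{-1}A-P+Q+F+\beta^{2}I=0$ and the gain~(\ref{g1}) collapses to the stated $K=-R_{1}^{-1}B^{T}\{P^{-1}+BR_{1}^{-1}B^{T}\}^{-1}A$, while the sign condition $(\epsilon^{-1}I-P)^{-1}>0$ is inherited verbatim.

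Next I would re-run the ISS Lyapunov computation of Theorem~\ref{th1} with $V(x)=x^{T}Px$ along the event-triggered closed loop $x(k+1)=(A_{c}+\Delta A)x(k)+BKe(k)$, where $A_{c}=A+BK$ and now $\Delta A=B\phi$. Expanding $\Delta V$ produces the nominal term $A_{c}^{T}PA_{c}$, the uncertainty cross terms $A_{c}^{T}P\Delta A+\Delta A^{T}PA_{c}+\Delta A^{T}P\Delta A$, and the error term in $e$. The cross terms are controlled by the completion-of-squares bound of Lemma~\ref{lem12}; substituting $\Delta A=B\phi$ turns $\epsilon^{-1}\Delta A^{T}\Delta A$ into $\epsilon^{-1}\phi^{T}B^{T}B\phi$, and the hypothesis $\tfrac{2}{\epsilon}\phi^{T}B^{T}B\phi\leq F$ dominates this contribution, the factor $2$ absorbing the combined $\epsilon^{-1}$ weights arising from bounding both the state and the error channels. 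Telescoping the remaining state quadratic with the Riccati identity, what survives on the state is $-x^{T}(Q+F+\beta^{2}I)x=-x^{T}\tilde{Q}x$ guarded by $\beta^{2}I+K^{T}R_{1}K-\tfrac{2}{\epsilon}A_{c}^{T}A_{c}\geq 0$, which is the matched reduction of~(\ref{scon3})/(\ref{cons12}) once $L=0$ and the weight is bounded above by $\tfrac{2}{\epsilon}I$.

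Finally, I would collect the error contribution, which in the matched case reduces to the weight $K^{T}B^{T}(P^{-1}-\epsilon I)^{-1}BK$ (well defined and positive by $(\epsilon^{-1}I-P)^{-1}>0$, via the resolvent identity $P(\epsilon^{-1}I-P)^{-1}P=\epsilon(P^{-1}-\epsilon I)^{-1}P$), giving $\Delta V\leq -x^{T}\tilde{Q}x+e^{T}K^{T}B^{T}(P^{-1}-\epsilon I)^{-1}BK\,e$. Invoking the ISS-Lyapunov characterization of Definition~\ref{def34} and demanding that the state dissipation retain a fraction $\sigma\in(0,1)$ of $\lambda_{min}(\tilde{Q})\norm{x}^{2}$ yields the trigger $\norm{e}^{2}\geq\mu_{2}\norm{x}^{2}$ with $\mu_{2}=\sigma\lambda_{min}(\tilde{Q})/\norm{K^{T}B^{T}(P^{-1}-\epsilon I)^{-1}BK}$. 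The main obstacle I anticipate is not the structure but the constants: reproducing the precise factor $\tfrac{2}{\epsilon}$ in the sufficient condition and the exact weight $(P^{-1}-\epsilon I)^{-1}$ in $\mu_{2}$ requires selecting the bounding inequalities consistently with the matched geometry and correctly converting the $Z$-weighted quantities of Theorem~\ref{th1} into their $(P^{-1}-\epsilon I)^{-1}$ counterparts; once those conversions are pinned down, positive definiteness of $\tilde{Q}$ and well-posedness of the error weight follow directly from the stated hypotheses.
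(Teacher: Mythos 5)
The paper never actually proves Corollary~\ref{cor1}: it is introduced only as ``a special case of Theorem~\ref{thr12}'' for matched uncertainty, and no argument is supplied. Your high-level plan is exactly that intended specialization, and your first step is sound: with $\Delta A=B\phi$ one has $(I-BB^{+})\Delta A=0$, the virtual input can be switched off ($\alpha=0$, $L=0$), and the Riccati equation~(\ref{ri1}) and gain~(\ref{g1}) collapse to the forms stated in the corollary.

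The genuine gap is that the corollary's specific constants --- precisely the items you defer with ``once those conversions are pinned down'' --- are the entire nontrivial content, and the two mechanisms you suggest for obtaining them fail. Specializing Theorem~\ref{th1} verbatim (set $L=0$) yields the hypotheses $\phi^{T}B^{T}ZB\phi\leq F$ and $\beta^{2}I+K^{T}R_{1}K-A_{c}^{T}ZA_{c}\geq 0$, an error weight $K^{T}B^{T}ZBK$, and numerator $\lambda_{min}(Q_{1})$; the corollary instead asserts $\frac{2}{\epsilon}\phi^{T}B^{T}B\phi\leq F$, $\beta^{2}I+K^{T}R_{1}K-\frac{2}{\epsilon}A_{c}^{T}A_{c}\geq 0$, weight $K^{T}B^{T}(P^{-1}-\epsilon I)^{-1}BK$, and numerator $\lambda_{min}(\tilde{Q})$ with $\tilde{Q}=Q+F+\beta^{2}I$. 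Your claim that ``the factor $2$ absorbs the combined $\epsilon^{-1}$ weights'' amounts to asserting $Z\leq\frac{2}{\epsilon}I$, i.e. $P(\epsilon^{-1}I-P)^{-1}P\leq\epsilon^{-1}I$, which does \emph{not} follow from $0<P<\epsilon^{-1}I$: in the scalar case $P=0.9\epsilon^{-1}$ gives $P(\epsilon^{-1}-P)^{-1}P=8.1\epsilon^{-1}$, so $Z=9.1\epsilon^{-1}>2\epsilon^{-1}$. Worse, your ``conversion'' of the error weight from $Z$ to $(P^{-1}-\epsilon I)^{-1}$ goes in the forbidden direction: since $Z-(P^{-1}-\epsilon I)^{-1}=\epsilon^{-1}I-P>0$, one has $\|K^{T}B^{T}(P^{-1}-\epsilon I)^{-1}BK\|\leq\|K^{T}B^{T}ZBK\|$ (positive semidefinite ordering implies ordering of spectral norms), so the resulting $\mu_{2}$ is \emph{larger} than the threshold Theorem~\ref{th1} certifies; a less conservative trigger can never be obtained by merely relabeling the theorem's quantities --- it needs a genuinely tighter bound. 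To reach conditions of the corollary's form one must redo the Lyapunov difference for the matched loop $x(k+1)=A_{c}x+B(\phi x+Ke)$, exploiting that uncertainty and error enter through the common channel $B$: for instance, apply the Lemma~\ref{lem12}-type completion of squares once with $a=A_{c}x$, $b=B(\phi x+Ke)$, then split $(u+v)^{T}(u+v)\leq 2u^{T}u+2v^{T}v$ (this is what generates $\frac{2}{\epsilon}$ factors), and combine with the LQR identity $A^{T}\{P^{-1}+BR_{1}^{-1}B^{T}\}^{-1}A=A_{c}^{T}PA_{c}+K^{T}R_{1}K$ and the matched Riccati equation. None of this computation appears in your proposal, so the corollary is asserted rather than proved.
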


\begin{remark}
The event-triggering law (\ref{evc1})  for (\ref{syse1})  is designed.  The designed parameter $\mu_{1}$  depends on the virtual gain $L$. Therefore $L$ has direct influence to design the robust stabilizing controller gain $K$ as well as on event-triggering law. 
The selection of design parameter $\beta$  and $\epsilon$ in (\ref{cos1})  has a greater significance on system stability due to the fact that the positive definiteness of (\ref{scon3}) depends on these parameters.
\end{remark}
\begin{remark}
Proposed framework can be extended in the presence of both system and input uncertainties and described by 
\begin{eqnarray}\label{sys1}
x(k+1)=\big[A+\Delta A(p)\big]x(k)+\big[B+\Delta B(p)\big]u(k)
\end{eqnarray} 
with a virtual dynamics (\ref{nomi1}) and  a cost-functional
\begin{eqnarray}\label{cosss2}
J&=&\frac{1}{2}\bigg\{\sum_{k=0}^{\infty} x(k)^{T}Qx(k)+x(k)^{T}Fx(k)+x(k)^{T}Hx(k)+\nonumber \\ && \beta^{2} x(k)^{T}x(k)+u(k)^{T}R_{1}u(k)+v(k)^{T}R_{2}v(k)\bigg\} 
\end{eqnarray}
 where $\beta>0$ and $H$ is a bound on $\Delta B$.
\end{remark}
\subsection*{Comparison with existing results}
This subsection compares the main contribution of this paper with the results reported in \cite{nst1}-\cite{nstt34}. To compare with \cite{nst1}, the mismatched part of uncertainty is neglected.   The Riccati equation mentioned in \cite{nst1} is  similar to   proposed Riccati equation (\ref{ri1}) in the presence of  matched uncertainty. The only difference is we have added a term $\beta^{2}x^{T}x$ in the cost-functional (\ref{ri1}).  Without this extra term the Riccati equation (\ref{ri1}) and controller gain (\ref{g1}) reduce to  similar form as mentioned in \cite{nst1}. So the results of (\ref{sys2}) can be recovered as a special case of  proposed results. 
\par
Moreover  in Theorem $1$ of \cite{nst1}-\cite{nstt34}, an event-triggering condition is stated for matched system which depends on uncertain matrix $\phi(p)$.  
Implementation of this  event-triggering condition is not realistic as matrix $\phi(p)$ is unknown. However in this paper, the proposed event triggering condition  discussed in Theorem \ref{th1} is independent of uncertainty $\phi(p)$. It directly depends  on the controller gain $K$.   Moreover the triggering condition for  matched system is also independent of uncertain matrix explicitly as seen from (\ref{une67}).   Thus, it is concluded that the proposed event-triggering condition is comparatively  more general and  easy to realize than \cite{nst1}-\cite{nstt34}.
\section{Numerical Examples and comparative studies}\label{sec4}
Consider an uncertain discrete-time linear system (\ref{syse1}) where $A+\Delta A=\begin{bmatrix}
p & 1+p \\ 1 & 0
\end{bmatrix}$, $B=\begin{bmatrix}
0 \\ 1
\end{bmatrix}$.
Here the system is mismatched in nature and the results of \cite{nst1}-\cite{nstt34} are not applicable.  To solve (\ref{ri1}), matrices $Q=I$, $R_{1}=I$, $R_{2}=I$, $F=\begin{bmatrix}
6.09 &6.09 \\ 6.09 & 6.09
\end{bmatrix}$ and $\alpha =10$ are selected. The design parameter $\epsilon=0.1$, $\beta=5$ , $\sigma=0.1$ and $\mu=0.29$  are used.  To design event-triggering condition $\sigma=0.1$ and $\mu=0.29$ are used. The uncertain parameter $p$ varies from $0$ to $0.8$. The simulation is carried-out in MATLAB for $20$ intervals with a sampling interval $T=1$ sec. The controller gains $K$ and $L$ are computed using  (\ref{g1}), (\ref{g2}) and their numerical values are  $K=[-0.9687 -0.0001]$ and $L=\begin{bmatrix}
-0.0006 & -0.1 \\ 0 & 0
\end{bmatrix}$.
\begin{figure}[h]
    \centering
    
    \subfigure[Convergence of uncertain states for event-triggered control for $p=0.8$.]
    {
        \includegraphics[width=3.5 in,height=1.5 in]{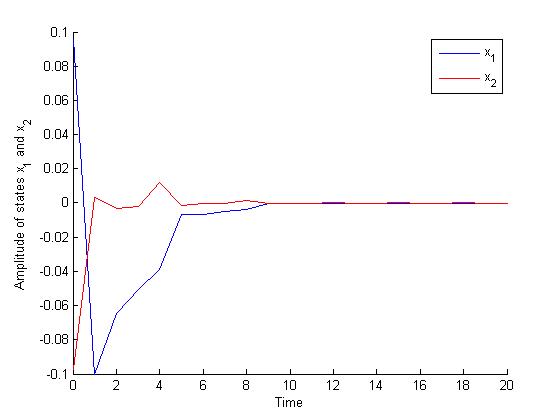}
        \label{fig:42}
    }
    \\
    \subfigure[Triggering instants at which control inputs are computed .]
    {
        \includegraphics[width=3 in, height=0.5 in]{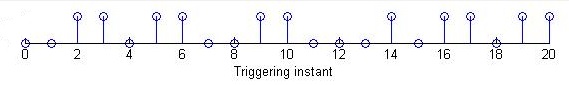}
       \label{fig:43}
    }
    
   \caption{Results of periodic and event-triggered control.}
\end{figure}

 Figure \ref{fig:42} shows the convergence of uncertain states for  event-triggered  control. For event triggered control the triggering instants are shown in Figure \ref{fig:43}. 
%
\section{Conclusion}\label{sec5}
A discrete-time periodic and aperiodic control of uncertain linear system is proposed in this paper. The control law is designed by formulating an optimal control problem for virtual nominal system with a modified cost-functional. An virtual input is defined to design the stabilizing controller gain along with the stability condition. The paper also proposes an event-triggered based control technique for NCS to achieve robustness. The event-condition and stability of uncertain system are derived using the ISS Lyapunov function. A new event-triggering law is derived which depends on the virtual controller gain in order to tackle mismatched uncertainty. A comparative study between existing and proposed results is also reported. \par
 A challenging future work to extend the proposed robust control framework for discrete-time nonlinear system with and without event-triggering input.
 This frame work can be formulated as a differential game problem where the control inputs $u$ and $v$ can be treated as minimizing and maximizing inputs. 
\appendices
\section{}\label{A1}
\begin{proof}[Proof of Theorem \ref{thr12}]
The proof has two parts. At first, we solve an optimal control problem to minimize (\ref{cos1}) for the nominal system (\ref{nomi1}).  For this purpose the optimal input $u$ and $v$ should minimize the Hamiltonian $H$, that means 
$\dfrac{\partial H}{\partial u}=0$ and $\dfrac{\partial H}{\partial v}=0$.
After applying discrete-time LQR methods, the Riccati equation (\ref{ri1}) and controller gains (\ref{g1}), (\ref{g2}) are achieved \cite{nst46}. To  prove the stability of uncertain system, let $V(x)=x(k)^{T}Px(k)$ be a Lyapunov function for (\ref{sys2}). Then applying (\ref{syse1}) the time difference of $V(x)$  along the $x$  is
\begin{eqnarray}\label{eq46}
\Delta V &=&  x(k)^{T}A_{c}^{T}PA_{c}x(k)+x(k)^{T}A_{c}^{T}P\Delta A x(k)\nonumber\\&&+x(k)^{T}\Delta APA_{c}x(k)+x(k)^{T}\Delta A^{T}P\Delta Ax(k)\nonumber\\&&-x(k)^{T}Px(k)
\end{eqnarray} 
where $A_{c}=(A+BK)$.
Using matrix inversion lemma, following  is achieved \cite{nst48}
\begin{eqnarray}\label{er2}
(P^{-1}-\epsilon I)^{-1}=P+P(\epsilon^{-1}I-P)^{-1}P
\end{eqnarray}
 Using (\ref{ri1}) and (\ref{er2}) in (\ref{eq46}), following is achieved
\begin{eqnarray}\label{grad1}
\Delta V &\leq& x(k)^{T}[A_{c}^{T}(P^{-1}-\epsilon I)^{-1}A_{c}-Q-\beta^{2}I- A^{T}\big \{P^{-1}\nonumber \\ && +BR_{1}^{-1}B^{T}+\alpha^{2}(I-BB^{+})R_{2}^{-1}(I-BB^{+})^{T}\big\}^{-1}\nonumber \\ &&A]x(k)-x(k)^{T}[F-\epsilon^{-1}\Delta A^{T}\Delta A]x(k)
\end{eqnarray}
Now applying  Lemmas \ref{lem12}, \ref{lem34}, the equation (\ref{grad1}) is simplified as
 \begin{eqnarray}\label{grad2}
\Delta V&\leq& -x(k)^{T} \big\{(\beta^{2}I+L^{T}R_{2}L+K^{T}R_{1}K)\nonumber\\&&- A_{c}^{T}(P^{-1}-\epsilon I)^{-1}) A_{c}\big\}x(k)
 \end{eqnarray}
 The inequality (\ref{grad2}) will be negative semi-definite if and only if equation (\ref{scon3}) is satisfied.
\end{proof}
\ifCLASSOPTIONcaptionsoff
  \newpage
\fi

\end{document}